\newtheorem{theorem}{Theorem}[section]
\newtheorem{lemma}[theorem]{Lemma}
\newtheorem{definition}[theorem]{Definition}
\newtheorem{remark}[theorem]{Remark}
\newcommand\res{ \mathord{\upharpoonright}}  
\begin{document}

\author{Ramiro de la Vega \footnote{Universidad de los Andes, Bogot\'a,
Colombia,
\ \ rade@uniandes.edu.co}}
\title{Coloring Grids
 \footnote{
   2000 Mathematics Subject Classification: Primary 03E50, Secundary
   03E05, 51M05. Key Words and Phrases: Continuum hypothesis,
   Sierpinski's theorem, n-grids. }
} \maketitle

\begin{abstract}
A structure $\mathcal{A}=\left(A;E_i\right)_{i\in n}$ where each
$E_i$ is an equivalence relation on $A$ is called an
\emph{$n$-grid} if any two equivalence classes coming from
distinct ´$E_i$'s intersect in a finite set. A function $\chi: A
\to n$ is an \emph{acceptable coloring} if for all $i \in n$, the
set $\chi^{-1}(i)$ intersects each $E_i$-equivalence class in a
finite set. If $B$ is a set, then the $n$-cube $B^n$ may be seen
as an $n$-grid, where the equivalence classes of $E_i$ are the
lines parallel to the $i$-th coordinate axis. We use elementary
submodels of the universe to characterize those $n$-grids which
admit an acceptable coloring. As an application we show that if an
$n$-grid $\mathcal{A}$ does not admit an acceptable coloring, then
every finite $n$-cube is embeddable in $\mathcal{A}$.
\end{abstract}

\section{Introduction}

Following \cite{sch2}, for a natural number $n \geq 2$ we shall
call an $n$-\emph{grid} a structure of the form
$\mathcal{A}=\left(A;E_i\right)_{i\in n}$ such that each $E_i$ is
an equivalence relation on the set $A$ and $[a]_i \cap [a]_j$ is
finite whenever $a \in A$ and $i<j<n$ (where $[a]_i$ denotes the
equivalence class of $a$ with respect to the relation $E_i$). An
$n$-\emph{cube} is a particular kind of $n$-grid where $A$ is of
the form $A=A_0 \times \cdots \times A_{n-1}$ and each $E_i$ is
the equivalence relation on $A$ whose equivalence classes are the
lines parallel to the $i$-th coordinate axis (i.e. two $n$-tuples
are $E_i$-related if and only if all of their coordinates coincide
except perhaps for the $i$-th one). An \emph{acceptable coloring}
for an $n$-grid $\mathcal{A}$ is a function $\chi:A \to n$ such
that $[a]_i \cap \chi^{-1}(i)$ is finite for all $a \in A$ and $i
\in n$.

In \cite{sch2}, J.H. Schmerl gives a really nice characterization
of those semialgebraic $n$-grids which admit an acceptable
coloring:

\begin{theorem} \emph{(Schmerl)}
Suppose that $2 \leq n < \omega$, $\mathcal{A}$ is a semialgebraic
$n$-grid and $2^{\aleph_0} \geq \aleph_{n-1}$. Then the following
are equivalent: \begin{itemize} \item[$(1)$] some finite $n$-cube
is not embeddable in $\mathcal{A}$. \item[$(2)$] $\mathbb{R}^n$ is
not embeddable in $\mathcal{A}$. \item[$(3)$] $\mathcal{A}$ has an
acceptable $n$-coloring.
\end{itemize}
\end{theorem}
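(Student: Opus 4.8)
The plan is to prove the cycle $(1)\Rightarrow(3)\Rightarrow(2)\Rightarrow(1)$, isolating the two substantive arguments (building a coloring, and upgrading finite cubes to $\mathbb{R}^n$) and using an easy pull-back as the link in between. The cardinal hypothesis enters through a single classical input: by the Kuratowski--Sierpi\'nski theorem, the cube $\mathbb{R}^n$ itself admits an acceptable coloring if and only if $2^{\aleph_0}\le\aleph_{n-2}$. Thus the assumption $2^{\aleph_0}\ge\aleph_{n-1}$ is precisely the assertion that $\mathbb{R}^n$ admits \emph{no} acceptable coloring, and I would record this reformulation first, since it is what makes $(3)$ and $(2)$ interact.

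For $(3)\Rightarrow(2)$, the easy link, suppose $\chi$ is an acceptable coloring of $\mathcal{A}$ and, toward a contradiction, that $f\colon\mathbb{R}^n\hookrightarrow\mathcal{A}$ is an embedding. Every line $L$ of $\mathbb{R}^n$ parallel to the $i$-th axis is a single $E_i$-class of the cube, so its image lies inside one $E_i$-class $C$ of $\mathcal{A}$; hence $f$ maps $\{x\in L:\chi(f(x))=i\}$ injectively into the finite set $C\cap\chi^{-1}(i)$. Therefore $\chi\circ f$ is an acceptable coloring of $\mathbb{R}^n$, contradicting the reformulated hypothesis, and $\mathbb{R}^n$ is not embeddable.

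The direction $(1)\Rightarrow(3)$ is the set-theoretic core, and it is where I would deploy elementary submodels. Fix $m$ with $[m]^n$ not embeddable. For large $\theta$ I would build a continuous $\in$-chain $\langle M_\xi:\xi<2^{\aleph_0}\rangle$ of elementary submodels of $(H(\theta),\in,\lhd)$ containing $\mathcal{A}$ and a well-order $\lhd$, with $A\subseteq\bigcup_\xi M_\xi$ and each $|M_\xi|<2^{\aleph_0}$, and assign to each $a\in A$ the stage $\rho(a)$ at which it is first captured. I would then color $a$ by a coordinate $i$ in which $a$ is already ``old'', namely one for which $[a]_i\cap M_{\rho(a)}$ already determines $a$ within its class; the purpose of this choice is that, inside a fixed $E_i$-class $C$, the points receiving color $i$ are confined to the level of a single submodel. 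The crux is to verify finiteness of $C\cap\chi^{-1}(i)$, and this is exactly where non-embeddability of $[m]^n$ is spent: were infinitely many points of $C$ to receive color $i$, their interaction with the transverse classes (which meet $C$ finitely, by the grid axiom) would let one read off an embedded copy of $[m]^n$. Semialgebraicity guarantees that the classes $[a]_i$ come from a single definable family, so the intersections $[a]_i\cap M_\xi$ are controlled uniformly, which is what should make the counting go through.

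The remaining implication $(2)\Rightarrow(1)$, in its contrapositive form ``every finite $n$-cube embeds $\Rightarrow \mathbb{R}^n$ embeds'', is the main obstacle and the only place where \emph{semialgebraicity} is indispensable; for arbitrary grids the passage from finite to infinite is independent of ZFC. Here I would use the o-minimality of the real field: the relations $E_i$ lie in a fixed definable family, so ``$[m]^n$ embeds'' is a first-order condition of bounded complexity, and cell decomposition reduces the infinitely many such statements to finitely many cell-types. A pigeonhole over cells, combined with a dimension count on the semialgebraic quotients $A/E_i$, should produce a single definable rectangle on which the $n$ coordinate projections are independent, that is, an embedded continuum-cube, yielding $f\colon\mathbb{R}^n\hookrightarrow\mathcal{A}$. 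Bridging ``arbitrarily large finite'' to ``a full $\mathbb{R}^n$'' by such a compactness-cum-dimension argument is the step I expect to require the most care, and closing it completes the cycle.
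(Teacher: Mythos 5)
Your skeleton is sound as far as it goes: the cycle $(1)\Rightarrow(3)\Rightarrow(2)\Rightarrow(1)$, the reformulation of $2^{\aleph_0}\geq\aleph_{n-1}$ via Kuratowski's theorem as ``$\mathbb{R}^n$ admits no acceptable coloring,'' and the pull-back argument for $(3)\Rightarrow(2)$ are all correct, and the last of these is exactly how the paper handles that implication (it attributes it to Kuratowski, following \cite{sch2}). Bear in mind that the paper does not prove this theorem itself --- it is Schmerl's, proved in \cite{sch2} --- but it does supply a complete proof of $(1)\Rightarrow(3)$, and in much greater generality: for \emph{arbitrary} $n$-grids with no hypothesis on the continuum, via the ``twisted'' characterization (Theorem \ref{main} together with Theorem \ref{finite cubes}).

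Measured against that, your sketch of $(1)\Rightarrow(3)$ has two genuine gaps. First, the scaffolding is the wrong shape for $n>2$: a single continuous chain $\langle M_\xi : \xi<2^{\aleph_0}\rangle$ with one capture stage $\rho(a)$ per point cannot separate $n$ coordinates when $n\geq 3$. The paper's Definition \ref{twisted} quantifies over test sets of $n-1$ submodels linearly ordered by $\in$, and the coloring in Lemma \ref{almenos} is built from a \emph{tree} of submodels $M_\sigma$ with strictly decreasing uncountable cardinalities along branches, together with a rank-and-degree bookkeeping (Lemma \ref{finitud}) to break ties among coordinates of equal rank; this is the grid analogue of the fact that Sierpi\'nski--Kuratowski decompositions of $X^n$ require $n-1$ nested well-orders, not one. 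Second, the step you describe as the crux --- ``were infinitely many points of $C$ to receive color $i$, one could read off an embedded copy of $[m]^n$'' --- is precisely the content of the paper's Theorem \ref{finite cubes}, and it is not a routine counting argument: it is an induction $P(j)$ in which the partial embedding $\varphi:l^j\to\mathcal{A}$ is reflected in the $\in$-maximal model $M_{k_{j+1}}$ to produce infinitely many distinct embeddings $\varphi_m$ sharing the quotient $\delta=\rho_{k_{j+1}}\circ\varphi$, which are then thinned (injective-vs-constant dichotomies, then a diagonal selection of $m_0,\dots,m_{l-1}$) so that stacking them adds a new coordinate. Asserting that the finite cube ``can be read off'' does not close this; it is the hardest part of the whole direction. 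Note also that your appeal to semialgebraicity here is a red herring: the paper's main point is that $(1)\Rightarrow(3)$ needs neither semialgebraicity nor any continuum hypothesis.

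For $(2)\Rightarrow(1)$ your cell-decomposition paragraph is only a gesture. The passage from ``every finite $n$-cube embeds'' to ``$\mathbb{R}^n$ embeds into $\mathcal{A}$ itself'' is the substantive semialgebraic content of Schmerl's paper, and pigeonhole over cell types plus a dimension count does not deliver it: compactness-style reasoning yields copies of each $l^n$ (or embeddings into elementary extensions), not a single embedding of the continuum cube into the given structure; one needs uniform definable families and a genuine analysis of the semialgebraic quotients. Since the paper simply cites \cite{sch2} for this direction, you are not at odds with an in-paper argument, but as it stands your proposal proves only $(3)\Rightarrow(2)$ and leaves both substantive implications open.
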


In this note, we present a characterization that works for any
$n$-grid (see Definition \ref{twisted} and Theorem \ref{main}).
Then we use this characterization to show that $(1) \Rightarrow
(3)$ in the previous theorem holds for arbitrary $n$-grids (see
Theorem \ref{finite cubes}). In fact, the size of the continuum
turns out to be irrelevant for this implication. The implication
$(3) \Rightarrow (2)$ for arbitrary $n$-grids follows from a
result of Kuratowski as it is mentioned in \cite{sch2}. None of
these implications can be reversed for arbitrary $n$-grids,
regardless of the size of the continuum.

\section{Twisted $n$-grids}

In this section we use elementary submodels of the universe to
obtain a characterization of those $n$-grids which admit an
acceptable coloring. At first sight this characterization seems
rather cumbersome, but it is the key to our results in the next
section. The case $n=3$ was already obtained in \cite{ram} with a
bit different terminology and latter used in \cite{sch1}.

As it has become customary, whenever we say that $M$ is an {\it
elementary submodel of the universe}, we really mean that
$(M,\in)$ is an elementary submodel of $(H(\theta),\in)$ where
$H(\theta)$ is the set of all sets of hereditary cardinality less
than $\theta$ and $\theta$ is a large enough regular cardinal
(e.g. when we are studying a fixed $n$-grid $\mathcal{A}$ on a
transitive set $A$, $\theta=(2^{|A|})^+$ is large enough).

Given an equivalence relation $E$ on a set $A$, we say that $B
\subseteq A$ is $E$-\emph{small} if the $E$-equivalence classes
restricted to $B$ are all finite. Note that the $E$-small sets
form an ideal in the power set of $A$. Using this terminology, an
$n$-coloring $\chi:A \to n$ is acceptable for the $n$-grid
$\left(A;E_i\right)_{i\in n}$ if and only if $\chi^{-1}(i)$ is
$E_i$-small for each $i \in n$.

A \emph{test set for} an $n$-grid $\mathcal{A}$ is a set
$\mathcal{M}$ of elementary submodels of the universe such that
$\mathcal{A} \in \bigcap \mathcal{M}$, $|\mathcal{M}|=n-1$ and
$\mathcal{M}$ is linearly ordered by $\in$.

\begin{definition}\label{twisted} We say that that an $n$-grid $\mathcal{A}=\left(A; E_i \right)_{i\in n}$ is {\rm twisted} if for every test set $\mathcal{M}$
for $\mathcal{A}$ and every $k \in n$, the set
$$ \left\{ x \in A \setminus \cup \mathcal{M} : [x]_i \in \cup \mathcal{M} \mbox{ for all } i \neq k
\right\}$$ is $E_k$-small.
\end{definition}

The rest of this section is devoted to show that twisted $n$-grids
are exactly the ones that admit acceptable colorings. For this,
let us fix an arbitrary $n$-grid $\mathcal{A}=\left(A; E_i
\right)_{i\in n}$; our first task is to cover $A$ with countable
elementary submodels in a way that allows us to define a suitable
rank function for elements of $A$ and for $E_i$-equivalence
classes of elements of $A$.

We fix $M_\Lambda$ an elementary submodel such that $A \cup
\{\mathcal{A}\} \subseteq M_\Lambda$ and we let
$\kappa=|M_\Lambda|$. Thinking of $\kappa$ as an initial ordinal,
we let $T=\bigcup_{m \in \omega} \kappa^m$ be the set of finite
sequences of ordinals in $\kappa$. We have two natural orders on
$T$, the tree (partial) order $\subseteq$ and the lexicographic
order $\leq$. In both orders we have the same minimum element
$\Lambda$, the empty sequence. For $\sigma \in T$ and $\alpha \in
\kappa$ we write $\sigma ^\smallfrown \alpha = \sigma \cup \left\{
\left< |\sigma|,\alpha \right> \right\}$. Given $\sigma \in T
\setminus \{\Lambda\}$ we write $\sigma + 1$ for the successor of
$\sigma$ in the lexicographic order of $\kappa^{|\sigma|}$; that
is
$$\sigma+1=\left( \sigma \res (|\sigma|-1) \right) ^\smallfrown
\left( \sigma(|\sigma|-1)+1 \right).$$ We shall write $\sigma
\land \tau$ for the infimum of $\sigma$ and $\tau$ with respect to
the tree order; thus for $\sigma \neq \tau$ we have: $$\sigma
\land \tau = \sigma \res |\sigma \land \tau|=\tau \res |\sigma
\land \tau| \,\,\, \mathrm{ and}$$
$$\sigma(|\sigma \land \tau|) \neq \tau(|\sigma \land
\tau|).$$

Now we can find inductively (on the length of $\sigma \in T$)
elementary submodels $M_\sigma$ such that:

\begin{itemize}

\item[$i$)] The sequence $\left<M_{\sigma ^\smallfrown \alpha} :
\alpha \in cof(|M_\sigma|) \right>$ is a continuous (increasing)
elementary chain,

\item[$ii$)] $M_\sigma \subseteq \bigcup \left\{ M_{\sigma
^\smallfrown \alpha} : \alpha \in cof(|M_\sigma|) \right \}$,

\item[$iii$)] $\{\mathcal{A}\} \cup \left\{ M_\tau : \tau+1
\subseteq \sigma \right \} \subseteq M_{\sigma ^\smallfrown 0}$,
and

\item[$iv$)] If $\tau \subsetneq \sigma$ and $M_\tau$ is
uncountable then $|M_\tau|>|M_\sigma|$.

\end{itemize}

We actually do not need to (and will not) define $M_{\sigma
^\smallfrown \alpha}$ when $M_\sigma$ is countable or if $\alpha
\geq cof(|M_\sigma|)$.

Although the lexicographic order on $T$ is not a well order, it is
not hard to see that conditions $ii$ and $iv$ allow the following
definition of rank to make sense:

\begin{definition}For $x \in M_\Lambda$ we define $rk(x)$ as the minimum
$\sigma \in T$ (in the lexicographic order) such that $M_\sigma$
is countable and $x \in M_\tau$ for all $\tau \subseteq \sigma$.
\end{definition}

Note that by the continuity of the elementary chains in condition
$i$, we have that $rk(x)$ is always a finite sequence of ordinals
which are either successor ordinals or $0$. In particular, if
$\sigma_x=rk(x)$, $\sigma_y=rk(y)$, $\sigma_x < \sigma_y$ and
$m=|\sigma_x \land \sigma_y|$, then $\sigma_y(m)$ is a successor
ordinal say $\alpha+1$ and we can define
$$\Delta(x,y)=(\sigma_x \land \sigma_y)^\smallfrown \alpha.$$

This last definition will only be used in the proof of Lemma
\ref{almenos}. The following remark summarizes the basic
properties of $\Delta(x,y)$ that we will be using; all of them
follow rather easily from the definitions.

\begin{remark}\label{delta} If $rk(x)<rk(y)$ then
\begin{itemize}
\item $x \in M_{\Delta(x,y)}$ and $y \notin M_{\Delta(x,y)}$,
\item $\Delta(x,y)+1 \subseteq rk(y)$, \item if $\sigma \supsetneq
\Delta(x,y)+1$ then $M_{\Delta(x,y)} \in M_\sigma$ (by conditions
$i$ and $iii$).
\end{itemize}
\end{remark}

After assigning a rank to each member of $M_\Lambda$, we need a
way to order in type $\omega$ all the elements of $M_\Lambda$ of
the same rank. This is easily done by fixing an injective
enumeration
$$M_\sigma=\left\{ t^\sigma_m : m \in \omega \right\}$$ for each
$\sigma$ for which $M_\sigma$ is countable, and defining the
degree of an element of $M_\Lambda$ as follows:

\begin{definition}
For $x \in M_\Lambda$ we define $deg(x)$ as the unique natural
number satisfying $$x = t^{rk(x)}_{deg(x)}.$$
\end{definition}

The following two lemmas will be used to construct an acceptable
coloring for $\mathcal{A}$ in the case that $\mathcal{A}$ is
twisted, although the second one does not make any assumptions on
$\mathcal{A}$.

\begin{lemma}\label{almenos} If $\mathcal{A}$ is twisted then there is a set $B \subseteq A$ and a partition $B= \bigcup_{k \in n} B_k$ such that:
\begin{itemize}
\item[$a)$] Each $B_k$ is $E_k$-small and \item[$b)$] $|\{i \in n:
rk([x]_i)=rk(x)\}| \geq 2$ for any $x \in A \setminus B$.
\end{itemize}
\end{lemma}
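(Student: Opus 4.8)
The plan is to color $B$ according to which direction carries the equivalence class of largest rank, and then, for each colored point, to extract a test set built from the $\Delta$-models that separate the smaller classes from that largest one.

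First I would record two facts. Since $\mathcal{A}\in M_\sigma$ for every $\sigma$ (by $iii$ together with the chains in $i$), any $M_\tau$ containing $x$ also contains every $[x]_i$, so $rk([x]_i)\le rk(x)$ for all $i$. More importantly, I claim that for $x\in B$ the ranks $rk([x]_0),\dots,rk([x]_{n-1})$ are pairwise distinct. Indeed, if $rk([x]_a)=rk([x]_b)=\sigma$ with $a\neq b$, then both classes lie in $M_\tau$ for every $\tau\subseteq\sigma$, hence so does the finite set $[x]_a\cap[x]_b$ (it is definable from the two classes and $\mathcal{A}$), and a finite set that is an element of an elementary submodel is contained in it; thus $x\in[x]_a\cap[x]_b\subseteq M_\tau$ for all $\tau\subseteq\sigma$, which forces $rk(x)\le\sigma$. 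Since also $\sigma=rk([x]_a)\le rk(x)$, we get $rk(x)=\sigma$, so $a,b\in S(x)$, contradicting $|S(x)|\le 1$. Consequently each $x\in B$ has a unique direction $k(x)$ for which $rk([x]_{k(x)})$ is maximal, and $rk([x]_i)<rk([x]_{k(x)})$ for every $i\neq k(x)$.

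Next I would set $B_k=\{x\in B: k(x)=k\}$, write $C_x=[x]_{k(x)}$, and for $i\neq k(x)$ put $\Delta_i=\Delta([x]_i,C_x)$, which is well defined because $rk([x]_i)<rk(C_x)$. The family $\mathcal{M}_x=\{M_{\Delta_i}:i\neq k(x)\}$ is then a test set: each $\Delta_i+1$ is an initial segment of $rk(C_x)$, so the nodes are $\subseteq$-nested and, by Remark \ref{delta}, the corresponding models form an $\in$-chain; they are pairwise distinct, since two coinciding ones would place two classes $[x]_i,[x]_j$ in a single $M_{\Delta_i}$ omitting $C_x$, whence the finite set $[x]_i\cap[x]_j$ would drag $x$, and therefore $C_x=[x]_{k(x)}$, into $M_{\Delta_i}$ — impossible. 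As $C_x\notin M_{\Delta_i}$ we also get $x\notin M_{\Delta_i}$, while $[x]_i\in M_{\Delta_i}$; hence $x$ lies in the twisted set $\{y\in A\setminus\cup\mathcal{M}_x:[y]_i\in\cup\mathcal{M}_x\ \mbox{for}\ i\neq k(x)\}$, which is $E_{k(x)}$-small.

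The last step is to show each $B_k$ is $E_k$-small, and here the choice of $C_x$ as reference (rather than $x$) is what pays off. Fix an $E_k$-class $C=[a]_k$ and take $x\in C\cap B_k$; then $C_x=[x]_k=C$ is the \emph{same} for all such $x$, so every separating node satisfies $\Delta_i+1\subseteq rk(C)$ and therefore ranges over the finite set of nodes obtained by decrementing the last coordinate of an initial segment of the fixed sequence $rk(C)$. Thus there are only finitely many possible test sets $\mathcal{M}_x$. If $C\cap B_k$ were infinite, pigeonhole would yield a single test set $\mathcal{M}$ shared by infinitely many $x\in C\cap B_k$; but all of these would lie in the one $E_k$-small twisted set determined by $\mathcal{M}$ and $k$, and in the single class $[a]_k$, contradicting $E_k$-smallness. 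Hence $C\cap B_k$ is finite for every class $C$, i.e. $B_k$ is $E_k$-small, and $B=\bigcup_{k\in n}B_k$ is the required partition.

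I expect the main obstacle to be the distinctness-of-ranks claim and, relatedly, the realization that one must separate the smaller classes from the maximal-rank class $C_x$ rather than from $x$ itself: only this makes the family of separating nodes finite once the $E_k$-class is fixed, which is exactly what the pigeonhole requires. Verifying that the $\Delta_i$ actually index defined models — the proper initial segments of $rk(C)$ sit on an $\in$-chain of uncountable models terminating at the countable $M_{rk(C)}$ — is routine from the construction and the minimality in the definition of $rk$.
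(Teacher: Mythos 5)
Your proof is correct and takes essentially the same route as the paper: the same set $B$ partitioned by the direction of strictly maximal class rank (with the same finite-intersection-plus-elementarity argument showing the class ranks are pairwise distinct on $B$ and forcing $rk(x)=rk([x]_i)$ twice off $B$), and in effect the same test sets, since your nodes $\Delta([x]_i,[x]_{k(x)})$ coincide with the paper's $\Delta([x]_{k_i},x)$ by exactly the finite-intersection identity the paper itself invokes when verifying the $\in$-chain. The one difference is that you make explicit the pigeonhole step --- within a fixed $E_k$-class the separating nodes all come from initial segments of the fixed sequence $rk([a]_k)$, so only finitely many test sets occur --- which the paper compresses into the single phrase ``use $\mathcal{M}$ as a test set for $\mathcal{A}$ to conclude that $B_k$ is $E_k$-small,'' so your write-up fills in that implicit uniformization rather than changing the argument.
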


\begin{proof}
For each $k \in n$ we let $B_k$ be the set of all $x \in  A$ such
that $rk([x]_k)>rk([x]_i)$ for all $i \neq k$. Let $B=\bigcup_{k
\in n} B_k$.

Note that for any $x \in A$ and $i \in n$ we have that $rk([x]_i)
\leq rk(x)$. On the other hand if $\sigma=rk([x]_k)=rk([x]_j)$ for
some $k \neq j$, then by elementarity and the fact that $[x]_k
\cap [x]_j$ is finite, it follows that $rk(x)\leq \sigma$ and
hence $rk(x)=\sigma$. This observation easily implies that
condition $b)$ is met. It also implies that if $x \in B_k$ then
$$rk([x]_{k_0})< \cdots < rk([x]_{k_{n-2}}) < rk([x]_k) \leq rk(x)$$
for some numbers $k_0,\dots,k_{n-2}$ such that
$\{k_0,\dots,k_{n-2},k\}=n$.

Now we put $\mathcal{M}=\left\{ M_{\Delta([x]_{k_i},x)} : i \in
n-1 \right\}$, and use $\mathcal{M}$ as a test set for
$\mathcal{A}$ to conclude that, since $\mathcal{A}$ is twisted,
$B_k$ is $E_k$-small.

To see that $\mathcal{M}$ is indeed a test set, it is enough to
show that $M_{\Delta([x]_{k_i},x)} \in M_{\Delta([x]_{k_j},x)}$
for $i<j$. So fix $i<j$ and note that since $[x]_{k_i} \cap
[x]_{k_j}$ is finite we have
$\Delta([x]_{k_i},x)=\Delta([x]_{k_i},[x]_{k_j})$ and therefore by
Remark \ref{delta}, $$\Delta([x]_{k_i},x)+1 \subseteq rk(x)\land
rk([x]_{k_j}).$$ But then $\Delta([x]_{k_i},x)+1 \subsetneq
\Delta([x]_{k_j},x)$ and again by Remark \ref{delta} we get
$M_{\Delta([x]_{k_i},x)} \in M_{\Delta([x]_{k_j},x)}$.

\end{proof}

\begin{lemma}\label{finitud}For all $i,k \in n$ with $i \neq k$, the
set
$$C_{i,k}=\left\{ x \in A : rk([x]_i) = rk([x]_k) \,\,\mathrm{and}\,\, deg([x]_i) < deg([x]_k) \right\}$$ is $E_k$-small.
\end{lemma}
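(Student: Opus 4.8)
The plan is to fix $i \neq k$ together with a single element $a \in A$, and to show directly that the intersection $[a]_k \cap C_{i,k}$ is finite; since $a$ is arbitrary, this is exactly the assertion that $C_{i,k}$ is $E_k$-small. The key fact I would exploit is that the pair $(rk,deg)$ determines an element of $M_\Lambda$ uniquely: by the definition of $deg$, every $y \in M_\Lambda$ satisfies $y = t^{rk(y)}_{deg(y)}$, so two elements of $M_\Lambda$ sharing both the same rank and the same degree must coincide. I would also note that each $E_i$-class $[x]_i$ of an element $x \in A$ itself lies in $M_\Lambda$ (being definable from $x \in M_\Lambda$ and $\mathcal{A} \in M_\Lambda$), so that $rk$ and $deg$ are indeed defined on these classes, and that $M_{rk([x]_i)}$ is countable, so the relevant enumeration is available.

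Setting $c = [a]_k$ and $d = deg(c)$, I would argue as follows. For any $x \in [a]_k \cap C_{i,k}$ we have $[x]_k = c$, hence $rk([x]_k)=rk(c)$ and $deg([x]_k)=d$. The two defining conditions of $C_{i,k}$ then yield $rk([x]_i)=rk([x]_k)=rk(c)$ and $deg([x]_i) < d$. By the uniqueness observation above, the class $[x]_i$ must equal $t^{rk(c)}_m$ for some $m < d$. Consequently, as $x$ ranges over $[a]_k \cap C_{i,k}$, the $E_i$-class $[x]_i$ assumes at most $d$ distinct values.

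To finish I would invoke the defining finiteness property of the grid. For each of the (at most $d$) admissible $E_i$-classes $e$, the set $e \cap c = [x]_i \cap [x]_k$ is finite because $i \neq k$. Since every $x \in [a]_k \cap C_{i,k}$ belongs to $[x]_i \cap c$ for one of these finitely many classes, the set $[a]_k \cap C_{i,k}$ is contained in a union of at most $d$ finite sets and is therefore finite.

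I do not expect a serious obstacle here. Unlike Lemma \ref{almenos}, this statement uses no twistedness hypothesis and rests entirely on the bookkeeping furnished by $rk$ and $deg$ together with the finiteness built into the notion of an $n$-grid. The only point demanding a little care is the observation that $deg([x]_k)$ is constant as $x$ runs through a fixed $E_k$-class; this is precisely what turns the inequality $deg([x]_i) < deg([x]_k)$ into a genuine finite bound $d$ on the number of admissible $E_i$-classes.
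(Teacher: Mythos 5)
Your proof is correct and is essentially the paper's own argument: fix $a$, note that for $x \in [a]_k \cap C_{i,k}$ the class $[x]_i$ equals $t^{\sigma}_m$ for some $m < d = deg([a]_k)$ (where $\sigma = rk([a]_k)$), so $[a]_k \cap C_{i,k}$ lies in a union of at most $d$ sets of the form $[x]_i \cap [a]_k$, each finite by the grid property since $i \neq k$. You are slightly more explicit than the paper on two bookkeeping points --- that the injective enumeration makes $(rk, deg)$ determine a class uniquely, and that only those $m < d$ actually realized as $deg([x]_i)$ matter --- but the route is the same.
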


\begin{proof}
Fix $a \in A$ and let $\sigma=rk([a]_k)$ and $d=deg([a]_k)$. Note
that if $x \in C_{i,k} \cap [a]_k$ then there is an $m<d$ (namely
$m=deg([x]_i)$) such that $x \in t^\sigma_m \cap t^\sigma_d$ and
$t^\sigma_m \cap t^\sigma_d$ is finite. Hence $C_{i,k} \cap [a]_k$
is contained in a finite union of finite sets.

\end{proof}

We are finally ready to prove the main result of this section.

\begin{theorem}\label{main}  The following are equivalent:

\begin{itemize}
\item[$1)$] $\mathcal{A}$ is twisted.

\item[$2)$] $\mathcal{A}$  admits an acceptable coloring.
\end{itemize}
\end{theorem}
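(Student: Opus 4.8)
The plan is to prove both directions, using the two lemmas for the hard implication $(1)\Rightarrow(2)$ and a direct argument for $(2)\Rightarrow(1)$.

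For the direction $(1)\Rightarrow(2)$, suppose $\mathcal{A}$ is twisted. I would invoke Lemma \ref{almenos} to obtain the set $B=\bigcup_{k\in n}B_k$ with each $B_k$ being $E_k$-small, and with the property that every $x\in A\setminus B$ satisfies $|\{i\in n: rk([x]_i)=rk(x)\}|\geq 2$. The idea is to define $\chi$ piecewise: on each $B_k$ we can safely set $\chi(x)=k$, since $B_k$ is $E_k$-small and hence $\chi^{-1}(k)\cap B$ meets each $E_k$-class finitely. The real work is coloring $A\setminus B$. For $x\in A\setminus B$, condition $b)$ guarantees at least two indices $i$ achieving the maximal rank $rk(x)$; among those indices achieving the maximal rank, I would break the tie using the degree function, picking (say) the index $k$ for which $([x]_k)$ has the \emph{largest} degree among the top-rank classes, and set $\chi(x)=k$. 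To verify this is acceptable, fix $k$ and an $E_k$-class $[a]_k$; I must show $\chi^{-1}(k)\cap[a]_k$ is finite. If $\chi(x)=k$ for $x\in[a]_k\setminus B$, then by the tie-breaking rule there is some $i\neq k$ with $rk([x]_i)=rk([x]_k)=rk(x)$ and $deg([x]_i)<deg([x]_k)$, so $x\in C_{i,k}$ for that $i$. Thus $\chi^{-1}(k)\cap[a]_k\subseteq B_k\cup\bigcup_{i\neq k}C_{i,k}$, and by Lemma \ref{finitud} each $C_{i,k}$ is $E_k$-small, so the whole set meets $[a]_k$ finitely.

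The main obstacle I anticipate is getting the tie-breaking definition exactly right so that the membership $\chi^{-1}(k)\cap[a]_k\subseteq B_k\cup\bigcup_{i\neq k}C_{i,k}$ actually holds. The degree comparison in $C_{i,k}$ has $deg([x]_i)<deg([x]_k)$, so I must choose $\chi(x)$ to be an index whose class has a strictly larger degree than some competing top-rank index; choosing the index of \emph{maximum} degree among the top-rank classes accomplishes this, since any such chosen $k$ then dominates at least one other top-rank index $i$ in degree. One subtlety is ensuring the tie-break is well-defined: among the (at least two) indices of maximal rank, the degrees $deg([x]_i)$ are distinct because the classes $[x]_i$ are distinct objects of the same rank and the enumeration $t^\sigma_m$ is injective, so a unique maximizer exists.

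For the converse $(2)\Rightarrow(1)$, suppose $\chi:A\to n$ is an acceptable coloring; I must show $\mathcal{A}$ is twisted. Fix a test set $\mathcal{M}$ and an index $k$, and let $S=\{x\in A\setminus\cup\mathcal{M}: [x]_i\in\cup\mathcal{M}\text{ for all }i\neq k\}$; the goal is to show $S$ is $E_k$-small. The key observation is that $\chi$ itself, and each $n$-grid parameter, belongs to every model in $\mathcal{M}$ (since $\mathcal{A}\in\bigcap\mathcal{M}$ and $\chi$ is definable from the structure together with a witnessing choice, or can be arranged to lie in the models). For $x\in S$, consider the color $\chi(x)=j$. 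If $j\neq k$, then $[x]_j\in\cup\mathcal{M}$ lies in some model $M\in\mathcal{M}$; by elementarity $M$ knows that $[x]_j\cap\chi^{-1}(j)$ is finite, hence this finite set is a subset of $M$, forcing $x\in M\subseteq\cup\mathcal{M}$, contradicting $x\notin\cup\mathcal{M}$. Therefore every $x\in S$ has $\chi(x)=k$, i.e. $S\subseteq\chi^{-1}(k)$, which is $E_k$-small by acceptability; hence so is $S$. The delicate point here is justifying that $\chi$ (or a suitable acceptable coloring) can be taken inside the models; since the statement "there exists an acceptable coloring" is true in $V$ and hence reflects into each elementary submodel containing $\mathcal{A}$, each model contains \emph{some} acceptable coloring, and running the argument with a single fixed coloring that lies in $\bigcap\mathcal{M}$ (obtained by reflecting the existential into the least model) closes the gap.
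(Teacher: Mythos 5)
Your proposal is, in substance, the paper's own proof in both directions. For $(1)\Rightarrow(2)$ you use Lemma \ref{almenos} to peel off $B=\bigcup_{k\in n}B_k$, color each $B_k$ with $k$, and on $A\setminus B$ break ties among the indices of maximal rank by taking the class of largest degree, observing that any point then colored $k$ lands in some $C_{i,k}$; this is precisely the paper's definition of the sets $C_k$ and its appeal to Lemma \ref{finitud}. For $(2)\Rightarrow(1)$ your argument that every $x$ in the critical set must receive color $k$ (because for $j\neq k$ the finite set $[x]_j\cap\chi^{-1}(j)$ lies in some $M\in\mathcal{M}$, hence is a subset of $M$, which would put $x$ into $\cup\mathcal{M}$) is exactly the paper's, yielding $X\subseteq\chi^{-1}(k)$.

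There is, however, one step whose justification as written is wrong: obtaining a single acceptable coloring in $\bigcap\mathcal{M}$ by ``reflecting the existential into the least model.'' Reflecting ``there is an acceptable coloring'' into the $\in$-least member $M_1$ of $\mathcal{M}$ produces some $\chi\in M_1$, but for the other models $M\in\mathcal{M}$ the relation $M_1\in M$ does not give $M_1\subseteq M$: elementary submodels of $H(\theta)$ need not be transitive, and test sets are not required to consist of countable models (indeed, in the test set built in the proof of Lemma \ref{almenos}, condition $iv)$ of the construction makes the $\in$-least model the one of largest cardinality). So $\chi\in M_1$ says nothing about $\chi$ belonging to the $\in$-larger models. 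The repair --- which is what the paper's parenthetical ``using elementarity and the fact that $\mathcal{M}$ is linearly ordered by $\in$'' compresses --- is an induction up the chain $M_1\in M_2\in\cdots\in M_{n-1}$: the statement ``there exists an acceptable coloring for $\mathcal{A}$ lying in $M_1\cap\cdots\cap M_j$'' has all its parameters in $M_{j+1}$ (by linearity each $M_i$ with $i\leq j$ is an element of $M_{j+1}$) and is true in $H(\theta)$ by the induction hypothesis, so by elementarity $M_{j+1}$ contains such a coloring; after $n-1$ steps one gets $\chi\in\bigcap\mathcal{M}$, and the rest of your argument goes through verbatim. A smaller caveat, shared with the paper: your claim that the degrees of the top-rank classes of $x$ are automatically distinct assumes $[x]_i\neq[x]_j$ as sets, which can fail, since two of the relations may share a class (necessarily finite, as $[x]_i\cap[x]_j$ is finite). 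Such degenerate points are harmless --- a tie at the maximum forces the relevant class to be finite, so within any fixed $E_k$-class they contribute only finitely many points and can be absorbed into any one color --- but a careful write-up should say so.
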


\begin{proof}
Suppose first that $\mathcal{A}$ is twisted. Let $B$ and $B_k$ for
$k \in n$ be as in Lemma \ref{almenos}, and let $C_{i,k}$ for $i,k
\in n$ be as in Lemma \ref{finitud}. For each $k \in n$ define
$C_k$ as the set of all $x \in A \setminus B$ such that:

\begin{itemize}

\item[$i$)] $rk(x)=rk([x]_k)$, and

\item[$ii$)] for all $i \in n \setminus \{k\}$, if
$rk([x]_i)=rk([x]_k)$ then $deg([x]_i) < deg([x]_k)$.

\end{itemize}

By condition $b)$ in Lemma \ref{almenos}, we have that $C_k
\subseteq \bigcup_{i \in n}C_{i,k}$ and therefore each $C_k$ is
$E_k$-small. It also follows that the $C_k$'s form a partition of
 $A \setminus B$ so that we can define an acceptable coloring for $\mathcal{A}$ by:
$$\chi(x)=k \mbox{ if and only if } x \in B_k \cup C_k.$$

Now suppose that $\mathcal{A}$ admits an acceptable coloring and
fix a test set $\mathcal{M}$ and $k \in n$. We want to show that
the set
$$ X = \left\{ x \in A \setminus \cup \mathcal{M} : [x]_i \in \cup \mathcal{M} \mbox{ for all } i \neq k
\right\}$$ is $E_k$-small. For this let $\chi:A \to n$ be an
acceptable coloring such that (using elementarity and the fact
that $\mathcal{M}$ is linearly ordered by $\in$) $\chi$ belongs to
each $M \in \mathcal{M}$. Now if $x \in X$ and $i \neq k$ then
there is an $M \in \mathcal{M}$ such that $[x]_i \cap \chi^{-1}(i)
\in M$ and hence $[x]_i \cap \chi^{-1}(i) \subset M$ (since $\chi$
is acceptable); this implies that $\chi(x) \neq i$. It follows
that $X \subseteq \chi^{-1}(k)$ so that $X$ is $E_k$-small.

\end{proof}

\section{Embedding cubes into $n$-grids}

Given an $n$-grid $\mathcal{A}=\left(A; E_i \right)_{i\in n}$ it
will be convenient in this section to have a name $\rho_i: A \to A
/ E_i$ for the quotient maps ($\rho_i(\cdot)=[\cdot]_i$). Note
that if $i \neq k$, $C \subseteq A$ is infinite and $\rho_k
\upharpoonright C$ is constant, then there is an infinite $D
\subseteq C$ such that $\rho_i \upharpoonright D$ is injective. We
will make repeated use of this fact without explicitly saying so,
in the proof of the following:

\begin{theorem}\label{finite cubes}
If $\mathcal{A}$ is a non-twisted $n$-grid then any finite
$n$-cube $l^n$ (with $l \in \omega$) can be embedded in
$\mathcal{A}$.
\end{theorem}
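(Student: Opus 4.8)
The plan is to convert the failure of twistedness into an explicit combinatorial seed and then grow the cube out of that seed by reflecting through the given chain of models. Since $\mathcal{A}$ is non-twisted, Definition~\ref{twisted} supplies a test set $\mathcal{M}=\{M_0,\dots,M_{n-2}\}$, nested by $\in$ with $\mathcal{A}\in M_0$, and an index $k$, which after permuting the $E_i$'s I take to be $k=n-1$, such that $X=\{x\notin\bigcup\mathcal{M}:[x]_i\in\bigcup\mathcal{M}\text{ for all }i<n-1\}$ is not $E_{n-1}$-small. Fix an $E_{n-1}$-class meeting $X$ in an infinite set $Y$; this $Y$ is the seed.

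Before building anything I would normalise $Y$. For $x\in X$ and $i<n-1$ the class $[x]_i$ is infinite, since it lies in some $M_j\subseteq\bigcup\mathcal{M}$ and a finite element of $M_j$ would be a subset of $M_j$, forcing the contradiction $x\in\bigcup\mathcal{M}$. The map sending $i$ to the least $j$ with $[x]_i\in M_j$ is injective for the same reason: if $[x]_i$ and $[x]_{i'}$ shared a model $M_j$, then the finite set $[x]_i\cap[x]_{i'}\in M_j$ would lie inside $M_j$ and capture $x$. As there are $n-1$ coordinates and $n-1$ models this map is a bijection, so after thinning $Y$ (finitely many bijections occur) and relabelling the coordinates $0,\dots,n-2$ I may assume $[y]_i\in M_i\setminus M_{i-1}$ for every $y\in Y$ and $i<n-1$ (with $M_{-1}=\emptyset$). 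Finally, applying the observation on the quotient maps $\rho_i$ repeatedly, I thin $Y$ so that $y\mapsto[y]_i$ is injective on $Y$ for each $i<n-1$.

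The engine of the whole argument is a reflection principle. If $c$ is a class, or a finite configuration of classes and points, witnessing a property $\Phi$ definable from parameters lying in some $M_j$, but with $c\notin M_j$, then $\Phi$ has infinitely many witnesses: otherwise the finite set of witnesses would itself be definable from those parameters, hence an element and so a subset of $M_j$, contradicting $c\notin M_j$. For $n=2$ this already finishes the job. Taking $R_0,\dots,R_{l-1}$ to be $E_0$-classes of seed points, all lying in $M_0$, the class $[a]_1\notin M_0$ meets every $R_t$, so infinitely many $E_1$-classes meet all the $R_t$; choosing $l$ of them as columns and picking one point in each column-row intersection gives an embedding of $l^2$, the cross-direction intersections being finite and hence the resulting points distinct with the correct $E_0,E_1$ pattern.

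For the inductive step I would drop the coordinate $n-2$ and pass to the $(n-1)$-grid $\mathcal{B}=(A;E_0,\dots,E_{n-3},E_{n-1})$. Since $\mathcal{B}\in M_0$ and $Y$ still witnesses that the corresponding set $X'$, for the test set $\{M_0,\dots,M_{n-3}\}$ and index $n-1$, is not $E_{n-1}$-small, $\mathcal{B}$ is non-twisted; by the induction hypothesis it embeds $l^{n-1}$, and reflecting this statement into $M_{n-2}$ yields such an embedding $g$ with image contained in $M_{n-2}$. What remains is to \emph{lift} $g$ by one dimension: I must stack $l$ copies of the sub-cube, pairwise $E_{n-2}$-parallel (corresponding points $E_{n-2}$-related) and living in disjoint families of $E_i$-classes for $i\neq n-2$. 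Here the seed re-enters, since each $y\in Y$ escapes $M_{n-2}$ while $[y]_{n-2}\in M_{n-2}$, exactly the configuration the reflection principle needs at the level of $M_{n-2}$. The \textbf{main obstacle} is precisely this lift: producing the $l$ parallel slices \emph{coherently}, so that each slice remains a full copy of the $l^{n-1}$-cube while corresponding points across slices are simultaneously $E_{n-2}$-related for every sub-cube point at once. I expect to manage it by applying the reflection principle not to a single class but to the entire finite configuration $g$ inside $M_{n-2}$, extracting one fresh $E_{n-2}$-parallel copy at a time and using the escaping seed points to guarantee the process can be continued the required $l$ times; the verification that the final set of points is injective and carries exactly the relations of $l^n$ is then routine from the finiteness of cross-direction intersections and the distinctness arranged during normalisation.
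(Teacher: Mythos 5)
Your set-up is sound and matches the paper's opening moves: the seed $Y$ inside one $E_{n-1}$-class, the normalisation making $i \mapsto M_i$ a bijection with $[y]_i \in M_i$ (your injectivity argument via the trapped finite intersection $[x]_i \cap [x]_{i'}$ is exactly right), the reflection principle, and the $n=2$ case (where $[a]_1 \notin M_0$ indeed follows, since otherwise $y \in [y]_0 \cap [a]_1$ would be caught in a finite set belonging to $M_0$). But the inductive step has a genuine gap, precisely at the point you yourself flag as the main obstacle. After you reflect the $l^{n-1}$-embedding into $M_{n-2}$ you have $g \in M_{n-2}$, hence (being a finite function) $g \subseteq M_{n-2}$. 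Your reflection principle manufactures new witnesses only from a witness lying \emph{outside} the model: for the property ``$\Phi$ is an embedding of $l^{n-1}$ into $\mathcal{B}$ with $\rho_{n-2} \circ \Phi = \rho_{n-2} \circ g$'' the only witness you hold is $g$ itself, which is \emph{inside} $M_{n-2}$, so the finitely-many-witnesses argument yields no contradiction; nothing rules out that $g$ is the unique embedding with that $E_{n-2}$-projection, and the ``extract one fresh $E_{n-2}$-parallel copy at a time'' process can stall at the very first extraction. The escaping seed points do not rescue this: they lie on a single $E_{n-1}$-class and the bare induction hypothesis (the theorem for $(n-1)$-grids) gives you an embedding with no stated relation to $Y$ and no control over its position relative to the models.

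What is missing is a \emph{strengthened} induction statement that keeps the escaping-witness/trapped-projection configuration alive at every stage, and this is exactly how the paper proceeds. Rather than recursing downward on the grid dimension $n$, the paper inducts upward on the number $j$ of cube dimensions built, via the statement $P(j)$: there is an embedding $\varphi$ of $l^j$ along coordinates $k_1,\dots,k_j$ such that for each unused $i$ the projection $\rho_i \circ \varphi$ is injective and \emph{belongs to} $M_i$, while $\varphi$ itself takes values \emph{outside} $\bigcup \left\{ M_i : i \in n \setminus \{k_1,\dots,k_j\} \right\}$. Your normalised seed is precisely $P(1)$. Given $P(j)$, one picks $k_{j+1}$ with $M_{k_{j+1}}$ the $\in$-maximum unused model; then $\delta = \rho_{k_{j+1}} \circ \varphi \in M_{k_{j+1}}$ while $\varphi \notin M_{k_{j+1}}$, and reflection applied to the whole configuration gives infinitely many embeddings $\varphi_m$, all with the \emph{same} projection $\delta$ --- this identical projection is what makes the $l$ slices coherently $E_{k_{j+1}}$-parallel for free, solving the coherence problem you identified; a pigeonhole/thinning argument (constant-or-injective on each $t \in l^j$, injectivity forced by the finiteness of cross-direction intersections together with $\rho_{k_{j+1}} \circ \varphi_m = \delta$, then a greedy choice of $m_0,\dots,m_{l-1}$) produces the stacked copy witnessing $P(j+1)$. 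So your plan can be repaired, but only by threading the paper's conditions $a)$ and $b)$ through the induction; as written, reflecting the bare induction hypothesis into $M_{n-2}$ destroys the escape property and the lift cannot be carried out.
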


\begin{proof}
By definition, since $\mathcal{A}$ is not twisted, there is a test
set $\mathcal{M}$ and a $k \in n$ such that for some $a \in A$,
the set
$$ B=\left\{ x \in [a]_k \setminus \cup \mathcal{M} : [x]_i \in \cup
\mathcal{M} \mbox{ for all } i \neq k \right\}$$ is infinite. For
each $x \in B$ and each $i \in n \setminus \{k\}$ there is an
$M^x_i \in \mathcal{M}$ such that $[x]_i \in M^x_i$. Since
$\mathcal{M}$ is finite, there must be an infinite $C \subseteq B$
on which the map $x \mapsto \left<M^x_i : i \in n \setminus \{k \}
\right>$ is constant, say with value $\left<M_i : i \in n
\setminus \{k \} \right>$. Note that since $C$ is disjoint from
$\cup \mathcal{M}$, the map $i \mapsto M_i$ must be injective and
hence $\mathcal{M}=\{M_i : i \in n \setminus \{k\} \}$, because
$|\mathcal{M}|=n-1$. Finally, we can find an infinite set $D
\subseteq C$ such that $\rho_i \upharpoonright C$ is injective for
all $i \neq k$.

Now taking $k_1=k$ and letting $\varphi$ be any injection from $l$
into $D$, we easily see that the following statement is true for $j=1$: \\

\emph{$\textbf{P(j)}$: There are distinct $k_1,\dots,k_j \in n$
and an embedding $\varphi:l^j \to (A; E_{k_1},\dots,E_{k_j})$ such
that:
\begin{itemize}
\item[$a)$] for $i\in n \setminus \{k_1,\dots,k_j \}$, $\rho_i
\circ \varphi$ is injective and belongs to $M_i$, \item[$b)$]
$\varphi$ takes values in $A \setminus \bigcup \left\{M_i : i \in
n \setminus \{k_1,\dots,k_j \} \right\}$.
\end{itemize}
}

Note that when $j=n$, conditions $a)$ and $b)$ become trivially
true, and $P(n)$ just says that there is an embedding (modulo an
irrelevant permutation of coordinates) of the finite cube $l^n$
into $\mathcal{A}$, which is exactly what we want to show. We
already know that $P(1)$ is true, so we are done if we can show
that $P(j)$ implies $P(j+1)$ for $1 \leq j<n$.

Assuming $P(j)$, let $\varphi:l^j \to (A; E_{k_1},\dots,E_{k_j})$
be such an embedding, and let $k_{j+1} \in n \setminus
\{k_1,\dots,k_j \}$ be such that $M_{k_{j+1}}$ is the
$\in$-maximum element of $\left\{M_i : i \in n \setminus
\{k_1,\dots,k_j \} \right\}$. Let us call $$\delta:=\rho_{k_{j+1}}
\circ \varphi \in M_{k_{j+1}}.$$

Now note that $\varphi \notin M_{k_{j+1}}$ and at the same time
$\varphi$ satisfies the following properties (on the free variable
$\Phi$), all of which can be expressed using parameters from
$M_{k_{j+1}}$:
\begin{itemize}
\item $\Phi:l^j \to (A; E_{k_1},\dots,E_{k_j})$ is an embedding,
\item $\rho_{k_{j+1}} \circ \Phi = \delta$, \item for $i\in n
\setminus \{k_1,\dots,k_j, k_{j+1} \}$, $\rho_i \circ \Phi$ is
injective and belongs to $M_i$, \item $\Phi$ takes values in $A
\setminus \bigcup \left\{M_i : i \in n \setminus \{k_1,\dots, k_j,
k_{j+1} \} \right\}$.
\end{itemize}

This means that there must be an infinite set (in fact there must
be an uncountable one, but we won't be using this) $\{\varphi_m :
m \in \omega\}$ of distinct functions satisfying those properties.
Going to a subsequence $l^j$-many times, we may assume without
loss of generality that for each $t \in l^j$, the map $m \mapsto
\varphi_m(t)$ is either constant or injective. Now since they
cannot all be constant, it is not hard to see that in fact all
these maps have to be injective: just note that if $t,t' \in l^j$
are in a line parallel to the $(r-1)$-th coordinate axis then it
cannot be the case that the map associated with $t$ is constant
while the one associated with $t'$ is injective, since otherwise
$\{\varphi_m(t'):m \in \omega \}$ would be an infinite set
contained in $[\varphi_0(t)]_{k_r} \cap
[\varphi_0(t')]_{k_{j+1}}$. To see this, just note that in that
situation we would have
$[\varphi_m(t')]_{k_r}=[\varphi_m(t)]_{k_r}=[\varphi_0(t)]_{k_r}$
and $[\varphi_m(t')]_{k_{j+1}}=(\rho_{k_{j+1}} \circ
\varphi_m)(t')= \delta(t') = (\rho_{k_{j+1}} \circ
\varphi_0)(t')=[\varphi_0(t')]_{k_{j+1}}$.

Next we can find an infinite $I \subseteq \omega$ such that for
each $t \in l^j$ and each $i \in n \setminus \{k_{j+1}\}$ the map
$m \mapsto [\varphi_m(t)]_i$ is injective when restricted to $I$.
From here one can find (one at a time) $l$ distinct elements
$m_0,\dots,m_{l-1}$ of $I$ such that for all $t,t' \in l^j$, for
all $r,r' \in l$ with $r \neq r'$ and for all $i \in n \setminus
\{k_{j+1}\}$, we have that $[\varphi_{m_r}(t)]_i \neq
[\varphi_{m_{r'}}(t')]_i$.

Finally we let $\psi :l^{j+1} \to (A; E_{k_1},\dots,E_{k_{j+1}})$
be the function defined by $\psi(t,r)=\varphi_{m_r}(t)$. By the
way that we constructed the $m_r$'s and using the fact that all
the $\varphi_m$'s are embeddings and also using that $\delta$ is
injective, one can see that $\psi$ is in fact an embedding. From
the fact that $\psi$ is essentially a finite union of some
$\varphi_m$'s and by the way we chose those $\varphi_m$'s, it
follows that conditions $a)$ and $b)$ in $P(j+1)$ are satisfied.

\end{proof}

This last theorem only goes one way: for example, the $n$-cube
$\omega^n$ is twisted for $n \geq 2$, but of course any finite
$n$-cube can be embedded in it. I suspect that only for very
``nice" classes of $n$-grids one can reverse this theorem.
Schmerl's theorem does it for semialgebraic $n$-grids; perhaps
some form of o-minimality is what is required.\\

The question of when can an infinite cube be embedded in an
arbitrary $n$-grid seems more subtle. For instance, let us
consider the case $n=2$. Using the same idea as for the proof of
\ref{finite cubes}, one can easily show:

\begin{theorem} If $\mathcal{A}$ is a non-twisted $2$-grid then either $l \times \omega_1$ can be
embedded in $\mathcal{A}$ for all $l \in \omega$, or $\omega_1
\times l$ can be embedded in $\mathcal{A}$ for all $l \in \omega$.
\end{theorem}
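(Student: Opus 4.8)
The plan is to run the $n=2$ case of the construction in the proof of Theorem \ref{finite cubes}, but to retain the uncountably many witnessing functions that were discarded there with a parenthetical remark. Since $\mathcal{A}$ is a non-twisted $2$-grid, there is a test set $\mathcal{M}=\{M\}$ (a single elementary submodel with $\mathcal{A}\in M$) and a $k\in\{0,1\}$ such that the set $\{x\in A\setminus M : [x]_i\in M \mbox{ for all } i\neq k\}$ is not $E_k$-small. I would treat $k=0$ and produce, for each $l\in\omega$, an embedding of $l\times\omega_1$; the case $k=1$ is identical after exchanging the two coordinates and yields $\omega_1\times l$, which accounts for the two alternatives in the statement. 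So fix $l$ and assume $k=0$. Non-$E_0$-smallness gives an $E_0$-class $[a]_0$ meeting the displayed set in an infinite set $B$, and after thinning $B$ I may assume $\rho_1$ is injective on $B$. Choose an injection $\varphi\colon l\to B$ and set $\delta=\rho_1\circ\varphi$. Each value $[\varphi(r)]_1$ lies in $M$ and $l$ is finite, so $\delta$ is a finite set of pairs from $M$, whence $\delta\in M$; on the other hand $\varphi$ takes values in $B\subseteq A\setminus M$, so $\varphi\notin M$.

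Next I would exploit elementarity exactly as in Theorem \ref{finite cubes}. Let $S$ be the set of all $\Phi\colon l\to A$ which embed the $1$-cube $(l;E_0)$ (that is, $\Phi$ is injective with $E_0$-homogeneous image) and satisfy $\rho_1\circ\Phi=\delta$. All the parameters $l,A,E_0,\delta$ lie in $M$, so $S\in M$, while $\varphi\in S\setminus M$. The standard argument then shows $S$ is \emph{uncountable}: if $S$ were countable there would be an enumeration of $S$ by $\omega$ belonging to $M$, forcing $S\subseteq M$ and contradicting $\varphi\notin M$. Fix distinct functions $\{\varphi_m : m\in\omega_1\}\subseteq S$.

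The remaining work is to thin $\{\varphi_m\}$ to an uncountable subfamily that assembles into a grid. For each $m$ the image of $\varphi_m$ lies in a single $E_0$-class $c_m$, while $\varphi_m(r)\in\delta(r)$ for every $r\in l$. For a fixed $E_0$-class $c$, each intersection $c\cap\delta(r)$ is finite (two classes from distinct relations meet in a finite set), so there are only finitely many $\Phi\in S$ with image in $c$; hence $m\mapsto c_m$ has finite fibres and there is an uncountable $W\subseteq\omega_1$ on which $m\mapsto c_m$ is injective. Define $\psi\colon l\times W\to A$ by $\psi(r,m)=\varphi_m(r)$ and identify $W$ with $\omega_1$. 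I would then verify that $\psi$ embeds the $2$-cube $l\times\omega_1$: for fixed $m$ the set $\{\psi(r,m):r\in l\}$ lies in the single $E_0$-class $c_m$, and since $c$ is injective on $W$, two values are $E_0$-related exactly when their $m$-coordinates coincide; for fixed $r$ the set $\{\psi(r,m):m\in W\}$ lies in the single $E_1$-class $\delta(r)$, and since $\delta=\rho_1\circ\varphi$ is injective (as $\rho_1$ is injective on $B$), two values are $E_1$-related exactly when their $r$-coordinates coincide. Injectivity of $\psi$ is immediate from these two statements.

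The only genuinely new ingredient over Theorem \ref{finite cubes} is upgrading ``infinitely many witnesses'' to ``uncountably many'', and this is the step I expect to carry the weight; the rest is the same bookkeeping of $E_i$-classes. The subtlety to get right is the orientation: with $k=0$ the witnesses all share an $E_0$-class, so it is the $\omega_1$-indexed family of \emph{distinct} classes $\{c_m\}$ that supplies the uncountable factor, producing $l\times\omega_1$ (and symmetrically $\omega_1\times l$ when $k=1$). Matching this against the cube convention — that fixing the second coordinate yields an $E_0$-class — is what determines which of the two alternatives occurs.
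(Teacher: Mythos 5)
Your proposal is correct and is essentially the argument the paper intends: the paper gives no explicit proof, saying only to reuse the idea of Theorem \ref{finite cubes} together with the parenthetical remark there that the witness family is in fact uncountable, which is exactly what you do (with the clean observation that $m\mapsto c_m$ has finite fibres, since each $E_0$-class meets each $\delta(r)$ finitely, replacing the constant-or-injective thinning step). Everything checks, including the orientation analysis tying $k=0$ to $l\times\omega_1$ and $k=1$ to $\omega_1\times l$; only note the small slip where you wrote ``$c$ is injective on $W$'' for ``$m\mapsto c_m$ is injective on $W$''.
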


However, it is not true that $\omega \times \omega$ embeds in any
non-twisted $2$-grid. For example, fix an uncountable family
$\left\{ A_\alpha : \alpha \in \omega_1 \right\}$ of almost
disjoint subsets of $\omega$ and let $A=\left\{(n,\alpha) \in
\omega \times \omega_1 : n \in A_\alpha \right\}$. Think of $A$ as
a subgrid of the $2$-cube $\omega \times \omega_1$. It is easy to
see that this is a non-twisted grid, but not even $\omega \times
2$ can be embedded in it.

\end{document}